\newtheoremstyle{neu_thm}
{13pt}       
{8pt}      
{\itshape}  
{}          
{\bfseries} 
{.}         
{.5em}      
{}          
\newtheoremstyle{neu_defn}
{13pt}       
{8pt}      
{}  
{}          
{\bfseries} 
{.}         
{.5em}      
{}          
\newtheorem{theorem}{Theorem}[section]
\newtheorem{lemma}[theorem]{Lemma}
\newtheorem{proposition}[theorem]{Proposition}
\newtheorem{conjecture}[theorem]{Conjecture}
\theoremstyle{neu_defn}
\titleformat{\section}{\normalfont\bfseries\centering}{\thesection.}{.25em}{}
\titleformat{\subsection}{\normalfont\bfseries}{\thesubsection.}{.25em}{}
\titlespacing{\section}{0pt}{*5}{*1.5}
\titlespacing{\subsection}{0pt}{*4}{*0.5}
\numberwithin{equation}{section}
\newcommand{\R}{\ensuremath{\mathbb R}}    
\newcommand{\Z}{\ensuremath{\mathbb Z}}    
\begin{document}
\title[On the principal minors of Fourier matrices]{On the principal minors of Fourier matrices}

\author[A. Caragea]{Andrei Caragea$^1$}
\address{$^1$KU Eichst\"att-Ingolstadt, MIDS, Goldknopfgasse 7, 85049 Ingolstadt, Germany}
\email{andrei.caragea@gmail.com}
\urladdr{https://sites.google.com/view/andrei-caragea}

\author[D. G. Lee]{Dae Gwan Lee$^2$}
\address{$^2$Department of Applied Mathematics, Pukyong National University, Yongso-ro 45, Nam-Gu, 48513 Busan, Republic of Korea}
\email{daegwan@pknu.ac.kr}

\begin{abstract}
For the $N$-dimensional Fourier matrix $\mathcal F_N$, we prove that if $N \geq 4$ is square-free, then every $2\times 2$ and $3\times 3$ principal minor of $\mathcal F_N$ is nonzero. We also show that if $N \geq 4$ is not square-free, then $\mathcal F_N$ has zero principal minors of all sizes. Moreover, based on numerical experiments, we conjecture that if $N$ is square-free, then all principal minors of $\mathcal F_N$ are nonzero.
\end{abstract}

\subjclass[2020]{42C15, 42A99, 11C20} 

\keywords{Fourier matrix, principal minors, Chebotar\"ev's theorem, exponential bases}

\maketitle
\thispagestyle{empty}

\section[\hspace*{1cm}Introduction and main results]{Introduction and main results}

The Fourier matrix, which expresses the discrete Fourier transform (DFT) as a transformation matrix, is widely used in many areas of mathematics and engineering, including discrete signal processing. 
Along with its unitarity up to a scaling factor, which allows the DFT to be inverted without losing signal information, the Fourier matrix possesses many interesting and useful properties. 
For instance, the restricted isometry property of random Fourier submatrices has been extensively utilized in compressed sensing \cite{CRT06,FR13}, the conditioning of cyclically contiguous submatrices of the Fourier matrix (or, more generally, Vandermonde matrices) has been studied in the context of super-resolution \cite{AB19,Ba22,Ba00,LL21}, and maximal rank-deficient submatrices of a Fourier matrix with prime power dimension have been explored in connection with uncertainty principles \cite{DV08}.

One notable property is that all minors of a Fourier matrix with prime dimension are nonzero, a result proven by Chebotar\"ev in the 1920s \cite{SL96}. 
In \cite{KN15}, Kozma and Nitzan highlight the connection between invertibility properties of Fourier submatrices and the construction of Riesz exponential bases. More recently, with the introduction of so-called hierarchical Riesz bases of exponentials in \cite{PRW24,CL22}, the importance of understanding whether principal minors of the Fourier matrix (with potentially permuted columns or rows) are nonzero has been emphasized in \cite[Conjecture~2]{LPW23} and \cite[p.\,36, Conjecture~2]{C24}.

In this paper, we investigate the invertibility of $2\times 2$ and $3\times 3$ principal submatrices of the Fourier matrix. 
Using elementary arguments, we establish a connection between the square-freeness of $N$ (i.e., the property that $N$ is divisible by no square number other than $1$) 
and the existence of zero $2\times 2$ and $3\times 3$ principal minors of the $N$-dimensional Fourier matrix. 
In particular, we show that if $N$ is not square-free, then zero principal minors of any size between $2$ and $N-2$ do in fact exist.
We further conjecture that, if $N$ is square-free, all principal submatrices of the $N$-dimensional Fourier matrix are invertible.

Before presenting our results, we introduce some necessary notations and terminology. 
The \emph{$N$-dimensional Fourier matrix} (or the \emph{$N$-point DFT matrix}) is given by $\mathcal F_N:=\big(\omega^{k\ell}\big)_{0\leq k,\ell\leq N-1}$, where $\omega=e^{2\pi i/N}$ is a principal $N$-th root of unity. 
The \emph{principal submatrix} of $\mathcal F_N$ associated with an index subset $K\subset\{0,1,\dots,N-1\}$ is given by $\mathcal F_N[K]:=\big(\omega^{k\ell}\big)_{k,\ell\in K}$.
Varying the index subset $K$ recovers all principal submatrices of $\mathcal F_N$.
A \emph{principal minor} refers to the determinant of a principal submatrix. 
With a slight abuse of notation, we will simply call the determinant of an $r\times r$ principal submatrix an $r\times r$ principal minor.

Let us first discuss the case of low dimensions.  
For the Fourier matrix $\mathcal F_N$ with $N = 1,2,3$, it is easy to check that every principal minor is nonzero. 
However, in dimension $N =4$, the Fourier matrix 
\[
\mathcal F_4
= \begin{pmatrix} 1 & 1 & 1 & 1 \\ 1 & i & -1 & -i \\ 1 & -1 & 1 & -1 \\ 1 & -i & -1 & i \end{pmatrix}
\]
has $2 \times 2$ singular principal submatrices 
\[
\mathcal F_4 [\{0,2\}]
= \begin{pmatrix} 1 & 1 \\ 1 & 1 \end{pmatrix} \quad \text{and} \quad 
\mathcal F_4 [\{1,3\}] 
= \begin{pmatrix} i & -i \\ -i & i \end{pmatrix},
\]
while all of its other $2\times 2$ and all $1 \times 1$ and $3 \times 3$ principal submatrices are nonsingular (Proposition \ref{prop:complementarity} below sheds more light on the singularity of complementary principal minors).

Our main results generalize these observations to higher dimensions. 

\begin{theorem}\label{thm:main1}
Let $N\geq 4$ be a square-free number. Then for each $r\in\{2,3,N-3,N-2\}$, every $r\times r$ principal minor of $\mathcal F_N$ is nonzero. 
\end{theorem}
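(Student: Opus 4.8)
The plan is to reduce all four cases to $r=2$ and $r=3$ and then to evaluate the relevant minor explicitly. Since $\tfrac1{\sqrt N}\mathcal F_N$ is unitary we have $\det\mathcal F_N\neq0$, so the complementarity of principal minors (Proposition~\ref{prop:complementarity}) gives $\det\mathcal F_N[K]=0$ if and only if $\det\mathcal F_N[K^c]=0$; as $|K^c|=N-|K|$, the case $r=N-2$ reduces to $r=2$ and $r=N-3$ reduces to $r=3$. For $r=2$ and $K=\{a,b\}$ one has $\det\mathcal F_N[\{a,b\}]=\omega^{a^2+b^2}-\omega^{2ab}=\omega^{2ab}\bigl(\omega^{(a-b)^2}-1\bigr)$, which vanishes exactly when $N\mid(a-b)^2$; square-freeness upgrades this to $N\mid(a-b)$, impossible since $0<|a-b|<N$. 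So the real content is the case $r=3$.

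For $r=3$ I would first normalize: since $\det\mathcal F_N[K]$ is invariant under a common permutation of its rows and columns, take $K=\{a,b,c\}$ with $a>b>c$ and set $p:=a-b\ge1$, $q:=b-c\ge1$, so that $p+q=a-c$ satisfies $2\le p+q\le N-1$. Conjugating by the invertible diagonal matrix $D:=\diag\bigl(e^{-\pi ik^2/N}\bigr)_{k\in K}$ turns $D\,\mathcal F_N[K]\,D$ into a symmetric matrix with unit diagonal and off-diagonal entries $e^{-\pi i(k-k')^2/N}$, i.e.\ the three values $\alpha:=e^{-\pi ip^2/N}$, $\beta:=e^{-\pi iq^2/N}$, $\gamma:=e^{-\pi i(p+q)^2/N}$. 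A direct expansion of the determinant of such a matrix gives
\[
\det\bigl(D\,\mathcal F_N[K]\,D\bigr)=1-\alpha^2-\beta^2-\gamma^2+2\alpha\beta\gamma=(\alpha^2-1)(\beta^2-1)-(\gamma-\alpha\beta)^2,
\]
and since $\det D\neq0$ it follows that $\det\mathcal F_N[K]=0$ if and only if $(\gamma-\alpha\beta)^2=(\alpha^2-1)(\beta^2-1)$.

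The next step is to unfold this identity. Using $(p+q)^2=p^2+q^2+2pq$ together with $e^{i\theta}-1=2ie^{i\theta/2}\sin(\theta/2)$ and cancelling common factors, the condition $(\gamma-\alpha\beta)^2=(\alpha^2-1)(\beta^2-1)$ becomes
\[
\sin^2\!\Bigl(\tfrac{\pi pq}{N}\Bigr)=e^{\pi i(p+q)^2/N}\,\sin\!\Bigl(\tfrac{\pi p^2}{N}\Bigr)\sin\!\Bigl(\tfrac{\pi q^2}{N}\Bigr).
\]
The left-hand side is a nonnegative real number. Since $N$ is square-free and $1\le p,q\le N-1$, neither $p^2$ nor $q^2$ is divisible by $N$, so $\sin(\pi p^2/N)$ and $\sin(\pi q^2/N)$ are nonzero reals; hence the unimodular factor $e^{\pi i(p+q)^2/N}$ would have to be real, i.e.\ $N\mid(p+q)^2$. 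Square-freeness then forces $N\mid(p+q)$, contradicting $2\le p+q\le N-1$. Thus the displayed equality is impossible, so $\det\mathcal F_N[\{a,b,c\}]\neq0$, which, together with the reduction above, completes the proof.

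The only genuinely computational step is the passage from the determinant identity to the trigonometric equation; it is elementary but one has to keep careful track of which of the pairwise differences $p$, $q$, $p+q$ plays which role. The conceptual crux---and the point I expect to demand the most care---is the observation that the right-hand side is forced to be real: this is where square-freeness enters, first to keep the two sine factors nonzero and then to upgrade $N\mid(p+q)^2$ to $N\mid(p+q)$, and it is exactly this mechanism that fails when $N$ is not square-free, in line with $\mathcal F_N$ then possessing vanishing principal minors.
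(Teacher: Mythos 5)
Your proposal is correct and follows essentially the same route as the paper: complementarity reduces $r=N-2,N-3$ to $r=2,3$, the determinant is rewritten so that only pairwise differences appear (your diagonal conjugation plays the role of the paper's translation Lemma~\ref{lem:indexreduction}), and the resulting identity $\sin^2(\pi pq/N)=e^{\pi i(p+q)^2/N}\sin(\pi p^2/N)\sin(\pi q^2/N)$ is exactly the paper's equation \eqref{eq:33eq} after the same $e^{i\theta}-1=2i e^{i\theta/2}\sin(\theta/2)$ manipulation, with square-freeness ruling out $N\mid n^2$ for $0<n<N$ in both arguments. The only cosmetic difference is that you establish the nonvanishing of the two sine factors first and thereby avoid the paper's case split on whether $\sin(\pi ab/N)$ vanishes.
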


\begin{theorem}\label{thm:main2}
Let $N\geq 4$ be not square-free, i.e., $N$ is divisible by a square number other than $1$. Then, for each $2\leq r\leq N-2$, there exists a zero $r\times r$ principal minor of $\mathcal F_N$.
\end{theorem}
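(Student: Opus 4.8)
The plan is to exhibit, for every $r$ with $2\le r\le N-2$, an index set $K\subseteq\{0,1,\dots,N-1\}$ with $|K|=r$ for which the principal submatrix $\mathcal F_N[K]$ is singular. Since $N$ is not square-free, fix a prime $q$ with $q^2\mid N$; then $N\ge q^2$ and, crucially, $q\mid N/q$. Working in $\Z/N\Z$, let $H=\{0,\,N/q,\,2N/q,\dots,(q-1)N/q\}$ be its subgroup of order $q$. Because $q^2\mid N$, every element of $H$ is a multiple of $q$, so $H$ lies in a single residue class modulo $q$.

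The crux is the following rank computation. Suppose $H\subseteq K$ and the elements of $K$ meet at most $q-1$ residue classes modulo $q$. Consider the $q$ rows of $\mathcal F_N[K]$ indexed by $H$: writing a typical element of $H$ as $jN/q$ with $0\le j\le q-1$ and setting $\zeta=\omega^{N/q}=e^{2\pi i/q}$, the entry in that row and in column $\ell$ is $\omega^{(jN/q)\ell}=\zeta^{\,j\ell}$, which depends only on $j$ and on $\ell\bmod q$. Hence columns of this $q\times|K|$ block whose indices lie in the same residue class modulo $q$ coincide, and retaining one column per occupied class yields a $q\times D$ Vandermonde matrix with the $D$ distinct nodes $\zeta^{c}$ (one per occupied class $c$), where $D\le q-1$. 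A Vandermonde matrix with distinct nodes and $q\ge D$ rows has rank $D<q$, so the $q$ rows indexed by $H$ are linearly dependent and $\mathcal F_N[K]$ is singular.

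With this I would cover the sizes in two layers. For $2\le r\le q$, take $K\subseteq H$ with $|K|=r$; then $\mathcal F_N[K]$ is the all-ones matrix, hence singular. For $q\le r\le N-N/q$, observe that the set obtained from $\{0,1,\dots,N-1\}$ by deleting one residue class modulo $q$ (say the class of $q-1$) has exactly $N-N/q$ elements and contains $H$; choosing inside it a subset $K\supseteq H$ with $|K|=r$ — possible since $q=|H|\le r\le N-N/q$ — the rank computation applies, so $\mathcal F_N[K]$ is singular. This already produces a vanishing $r\times r$ principal minor for every $r$ with $2\le r\le N-N/q$, where $N-N/q\ge q$ because $N\ge q^2$. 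Invoking Proposition \ref{prop:complementarity} (a principal minor of $\mathcal F_N$ vanishes if and only if its complementary principal minor vanishes), we also obtain vanishing $r\times r$ principal minors for every $r$ with $N/q\le r\le N-2$; and since $q\ge 2$ forces $N/q\le N-N/q$, these two ranges together exhaust $\{2,3,\dots,N-2\}$.

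I expect the main obstacle to be precisely the \emph{intermediate} sizes: all-ones blocks inside a proper subgroup only yield small singular minors, complementarity only promotes those to large ones, and neither route reaches the sizes near $N/2$. The bridging device is the observation that the $q$ rows indexed by the small subgroup $H$ already have rank at most $q-1$ as soon as the columns of $K$ are confined to at most $q-1$ residues modulo $q$; this manufactures singular blocks of \emph{all} cardinalities from $q$ to $N-N/q$, and once $q\ge2$ that interval overlaps the complementary interval. The remaining points — that each residue class modulo $q$ has exactly $N/q$ elements, that $H$ sits inside one of them, and that the two cardinality ranges meet — are routine consequences of $q^2\mid N$.
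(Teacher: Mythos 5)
Your proposal is correct, and it rests on the same two pillars as the paper's proof: the all-ones block on the order-$q$ subgroup $H=\{0,N/q,\dots,(q-1)N/q\}$ (valid because $q^2\mid N$), the observation that the rows indexed by $H$ depend only on the column index modulo $q$, and Proposition~\ref{prop:complementarity}. The organization, however, differs in a useful way. The paper first invokes complementarity to reduce to $r\le N/2$, then builds the column set as a union of $s$ full residue classes plus an initial segment of the next one, and gets linear dependence because the relevant row patterns live in dimension $s+1\le \lfloor p/2\rfloor+1<p$; this inequality fails for $p=2$, which is why the paper treats $p=2$ as a separate (easy) case. You instead only require the column set to \emph{avoid one} residue class modulo $q$: then the $q$ rows indexed by $H$ take at most $q-1$ distinct column values, hence have rank at most $q-1<q$, and this single argument produces singular principal submatrices of every size from $q$ up to $N-N/q\ge N/2$, uniformly in $q$ (including $q=2$), with complementarity applied only at the end to reach sizes up to $N-2$. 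So your route buys a uniform treatment without the case split and with a weaker hypothesis on the occupied residue classes ($\le q-1$ instead of roughly $q/2+1$), at the cost of nothing; the paper's version is essentially the same computation packaged around an explicit parametrization $r=spm+t$. One small remark: you do not even need the Vandermonde rank statement — having at most $q-1$ distinct columns among the $q$ rows already forces the rank bound — but invoking it is of course harmless.
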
  

While working on the proof the main result of \cite{CL22}, we realized that the existence of hierarchical Riesz bases of exponentials for unions of intervals with rational endpoints would follow directly from invertibility properties of certain minors of Fourier matrices. This lead to the formulation of Conjecture $2$ in \cite{LPW23}. Based on subsequent numerical and symbolic experiments in Python, we arrived at the following conjecture in the previous arXiv version of this manuscript. 

\begin{conjecture}[Caragea, Lee, Malikiosis, Pfander]\label{conj:main}
\,
\begin{itemize}
   \item[a.] For any natural number $N$, there exists a permutation $\sigma$ on $\{0,1,\dots,N-1\}$ such that the Fourier matrix with permuted columns $\big(\omega^{k\sigma(\ell)}\big)_{0\leq k,\ell\leq N-1}$ has no zero principal minors.
   \item[b.] If $N$ is square-free, then the identity permutation suffices, that is, the Fourier matrix $\mathcal F_N$ has no zero principal minors (also stated earlier in \cite{CMN24}). 
   \item[c.] If $N$ is not square-free, then the Fourier matrix $\mathcal F_N$ has a zero $r \times r$ principal minor for all $2\leq r\leq N-2$.   \end{itemize}
\end{conjecture}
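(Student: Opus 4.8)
The plan is to handle the three parts separately; two of them reduce to material already available. Part~(c) is precisely Theorem~\ref{thm:main2}, since every non-square-free $N$ is automatically $\geq 4$, so nothing new is needed there. For part~(a), the square-free case is immediate from part~(b) with $\sigma=\mathrm{id}$, so the real content is the construction of a good column permutation when $N$ is \emph{not} square-free; here I would factor $N=p_1^{a_1}\cdots p_t^{a_t}$ and use the prime-factor (Good--Thomas) decomposition $\mathcal F_N=P_{\mathrm{out}}\,(\mathcal F_{p_1^{a_1}}\otimes\cdots\otimes\mathcal F_{p_t^{a_t}})\,P_{\mathrm{in}}$ to assemble a global permutation from permutations that handle the prime-power factors $\mathcal F_{p^a}$ one at a time, for which one can hope to write down an explicit good ordering (for instance one making the dangerous submatrices triangular, as already happens for $N=4$ after swapping a single pair of columns). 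The subtle point is that the Good--Thomas identity uses \emph{different} permutations on rows and columns, so a principal submatrix of $\mathcal F_N$ is carried to a submatrix of the tensor product whose row and column index sets no longer coincide; that discrepancy has to be tracked carefully, and it is also precisely why part~(b) does not follow from Chebotar\"ev's theorem by a naive tensor-product argument.

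The heart of the matter is part~(b). Fix a square-free $N=p_1\cdots p_t$ and $K=\{k_1,\dots,k_r\}\subset\{0,\dots,N-1\}$; by the complementarity between $\mathcal F_N[K]$ and $\mathcal F_N[K^{c}]$ (Proposition~\ref{prop:complementarity}) it suffices to take $r\leq N/2$. Expanding the determinant,
\[
\det\mathcal F_N[K]\;=\;\sum_{\pi\in S_r}\operatorname{sgn}(\pi)\,\omega^{\,e_\pi},\qquad e_\pi\;:=\;\sum_{i=1}^{r}k_i\,k_{\pi(i)}\pmod N ,
\]
so $\det\mathcal F_N[K]$ is a $\mathbb{Z}$-linear combination of $N$-th roots of unity. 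Route~(i) is to invoke the classical structure of vanishing sums of roots of unity: for square-free $N$ the relations among the powers of $\omega$ form the $\mathbb{Z}$-module spanned by the rotated prime sums $\omega^{a}\bigl(1+\omega^{N/p}+\cdots+\omega^{(p-1)N/p}\bigr)$, $p\mid N$ (a classical fact for square-free $N$, traceable through the factorizations $\mathbb{Z}[\omega]\cong\mathbb{Z}[\zeta_{p_1}]\otimes\cdots\otimes\mathbb{Z}[\zeta_{p_t}]$ of the ring of integers and the corresponding splitting of the group ring; cf.\ de~Bruijn and Lam--Leung). Thus $\det\mathcal F_N[K]=0$ would force the sign-weighted multiset $\{e_\pi\}_{\pi\in S_r}$ to break into such blocks; since a block for the prime $p$ fills an entire coset of $\langle N/p\rangle$ with $p$ exponents, and a negative weight can only arise by cancelling one block $\omega^{a}P_p$ against another $\omega^{b}P_p$, this imposes rigid congruence constraints linking $r$, the primes dividing $N$, and the residues $k_ik_j$. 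For small $r$ this is exactly how Theorem~\ref{thm:main1} goes --- the case $r=2$ collapses to $N\mid(k_1-k_2)^2$, hence $k_1\equiv k_2$ --- and the programme is to push the same bookkeeping through for all $r\leq N/2$.

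Route~(ii) adapts the Galois/valuation argument behind modern proofs of Chebotar\"ev's theorem. For a prime $p\mid N$ write $N=pM$ with $\gcd(p,M)=1$, fix a prime ideal $\mathfrak p$ of $\mathbb{Z}[\omega]$ above $p$, and observe that modulo $\mathfrak p$ the $p$-part of $\omega$ reduces to $1$ while the $M$-part reduces to a primitive $M$-th root of unity in the residue field $\mathbb F_q$. Reducing mod $\mathfrak p$ therefore collapses $\det\mathcal F_N[K]$ to the smaller-conductor minor $\det\bigl(\overline{\zeta}_M^{\,k_ik_j}\bigr)_{i,j}$ over $\mathbb F_q$, which suggests an inductive treatment: when this reduction does not vanish the original minor is a $\mathfrak p$-unit and we are done, and when it vanishes --- which happens exactly when two indices of $K$ coincide modulo $M$ --- one must pass to higher order in a uniformizer $\pi$ of $\mathfrak p$. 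For $N=p$ prime one has $M=1$ and this recovers Tao's computation: the first non-vanishing $\pi$-coefficient is, up to units and factorials, a Vandermonde-type determinant over $\mathbb F_p$, nonzero because the $k_i$ are distinct modulo $p$.

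I expect the main obstacle to be precisely the failure of that base step for composite $N$: over $\mathbb{Z}/N\mathbb{Z}$ the indices $k_i$ need not be distinct modulo $p$, the quadratic forms $\sum_i k_ik_{\pi(i)}$ can coincide modulo $M$ for different $\pi\in S_r$, and one must prove that the resulting higher-order contributions never cancel completely. The numerical evidence indicates that such coincidences, though genuinely present, never conspire to annihilate the whole minor; a proof will presumably have to exploit the special symmetric, quadratic-form structure of $\mathcal F_N[K]$ --- perhaps by combining the valuations at \emph{all} primes $p\mid N$ simultaneously with the rigidity coming from route~(i) --- rather than treating $\{e_\pi\}_{\pi\in S_r}$ as an arbitrary sign-weighted multiset.
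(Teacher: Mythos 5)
There is a fundamental problem: the statement you are trying to prove is a \emph{conjecture} in the paper, and the paper itself reports that part~(a) is in fact \emph{false}. After fixing a numerical error and refining their permutation search, the authors found that for $N=16$ no column permutation $\sigma$ makes all principal minors of $\big(\omega^{k\sigma(\ell)}\big)$ nonzero, and that $16$ is the smallest such $N$. So your proposed construction of a good permutation for non-square-free $N$ via the Good--Thomas decomposition $\mathcal F_N=P_{\mathrm{out}}(\mathcal F_{p_1^{a_1}}\otimes\cdots\otimes\mathcal F_{p_t^{a_t}})P_{\mathrm{in}}$ cannot be completed, and indeed the obstruction already appears at the first prime-power case $N=p^a$ with $a\geq 2$ (here $2^4$): there is no ``explicit good ordering'' of the columns of $\mathcal F_{16}$ to feed into the tensor assembly. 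Your own caveat about the row/column permutation mismatch in Good--Thomas is well taken, but the failure is more basic than bookkeeping.

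For the remaining parts: your reduction of part~(c) to Theorem~\ref{thm:main2} is correct and is exactly what the paper does (every non-square-free $N$ is at least $4$). Part~(b), however, is left as a genuine open problem by the paper --- Theorem~\ref{thm:main1} settles only the $2\times 2$, $3\times 3$ (and complementary) principal minors --- and your two routes do not close it. Route~(i) (de Bruijn/Lam--Leung structure of vanishing sums of $N$-th roots of unity for square-free $N$) and route~(ii) (reduction modulo a prime of $\mathbb Z[\omega]$ above $p\mid N$ plus higher-order valuation analysis \`a la Tao's proof of Chebotar\"ev) are reasonable programmes, but as you yourself note, the crucial base-step/cancellation analysis for composite $N$ and $r\geq 4$ is missing, and the paper explicitly warns that already for $4\times 4$ minors the analogous identity involves factors of non-unit modulus, so that elementary trigonometric or combinatorial bookkeeping appears insufficient. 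As written, your proposal proves part~(c) (by citation), sketches an unfinished programme for part~(b), and attempts to prove a statement, part~(a), that the authors have disproved.
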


Theorem \ref{thm:main2} solves part $c.$ of Conjecture \ref{conj:main} in the affirmative. After correcting a numerical error in our Python code and substantial refinement of the algorithm used to search through the set of possible permutations, we were also able to disprove part $a.$ of Conjecture \ref{conj:main}. We found that for $N=16$ there are no `good' permutations and that this is the smallest $N$ for which this occurs. A detailed discussion of the numerics involved and the algebraic insights gained working on this will be presented in a different manuscript, currently in preparation.

We would like to note that after posting this paper on arXiv, we received a correspondence drawing our attention to the now published paper \cite{CMN24} which is closely related to our work and was uploaded to arXiv in April 2024. 
Therein, Cabrelli, Molter and Negreira also arrived at part $b.$ of Conjecture \ref{conj:main} (and, to our knowledge, this is the first explicit formulation of this conjecture in the literature), and provided a similar proof for Lemma \ref{lem:indexreduction} below. They also argued, in virtually the same way as we do in the proof of Theorem \ref{thm:main1}, that if $N$ is square-free, then all $2\times 2$ principal minors of $\mathcal F_N$ are nonzero. We decided to still include the proofs of these statements in the current manuscript for the reader's convenience. It is interesting to note that while our motivation to investigate principal submatrices of the Fourier matrix comes from the study of hierarchical exponential Riesz bases, Cabrelli, Molter and Negreira arrive upon this problem from a wish to generalize the idea of weaving bases of finite dimensional vector spaces to the generality of weaving different stable generator sets for shift-invariant spaces. 

Preliminary investigations show that, already for the case of $4 \times 4$ principal minors, elementary proof techniques are likely no longer sufficient to resolve part $b.$ of Conjecture~\ref{conj:main}. In fact, the existence of a zero $4\times 4$ principal minor leads to an equation very similar to \eqref{eq:33eq} below, namely that 

\[\Big(r_A e^{i \pi  \frac{A}{N}}-1\Big)\Big(r_B e^{i \pi \frac{B}{N}}-1\Big)=\Big(r_C e^{i \pi \frac{C}{N}} -1\Big)^2\]
for some integers $A,B,C$. However, the fact that the radii $r_A, r_B, r_C$ need not be $1$ significantly complicates any attempt at a contradiction derived from purely trigonometric considerations. We expect that
deeper number theoretic and cyclotomic polynomial tools are needed to settle part $b.$ of Conjecture~\ref{conj:main}.

\section{Auxiliary results}

In this section we note a result involving complementary principal minors and an elementary lemma which essentially states that, when looking for zero principal minors, it suffices to assume that the first index is $0$. 

\begin{proposition}[Proposition 3 in \cite{L25}]\label{prop:complementarity}
Let $K\subset\{0,1,\dots,N-1\}$. Then the principal submatrix $\mathcal F_N[K]$ is singular if and only if the complementary principal submatrix $\mathcal F_N[K^C]$ is singular, where $K^C = \{0,1,\dots,N-1\}\setminus K$.
\end{proposition}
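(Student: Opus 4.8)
The plan is to exploit the fact that $\mathcal F_N$ is (up to the scalar $1/\sqrt N$) a unitary matrix, so that its inverse has an explicit form, and to combine this with the Schur complement / Jacobi-type identity relating a minor of a matrix to the complementary minor of its inverse. Concretely, write $U = \frac{1}{\sqrt N}\mathcal F_N$, so that $U$ is unitary and $U^{-1} = U^* = \frac{1}{\sqrt N}\overline{\mathcal F_N}$. The classical Jacobi identity for complementary minors states that for an invertible matrix $M$ and index set $K$,
\[
\det\big(M[K]\big) = \det(M)\cdot\det\big((M^{-1})[K^C]\big),
\]
where $M[K]$ denotes the principal submatrix on rows and columns indexed by $K$. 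Applying this with $M = U$ gives $\det\big(U[K]\big) = \det(U)\cdot\det\big((U^*)[K^C]\big)$, and since $(U^*)[K^C] = \overline{U[K^C]}$, we obtain
\[
\det\big(U[K]\big) = \det(U)\cdot\overline{\det\big(U[K^C]\big)}.
\]
Because $U$ is invertible, $\det(U)\neq 0$; hence $\det\big(U[K]\big) = 0$ if and only if $\det\big(U[K^C]\big) = 0$. Finally, $\mathcal F_N[K] = \sqrt N\, U[K]$ (an invertible rescaling), so $\mathcal F_N[K]$ is singular exactly when $U[K]$ is, and the claim follows.

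The key steps, in order, are: (1) recall that $\frac{1}{\sqrt N}\mathcal F_N$ is unitary, so its inverse is its conjugate transpose, which is again a scalar multiple of a Fourier-type matrix; (2) invoke the Jacobi complementary-minor identity for the unitary matrix $U$; (3) observe that passing to the conjugate transpose merely conjugates the relevant submatrix and therefore does not affect (non)vanishing of the determinant; (4) note that $|\det U| = 1$ is nonzero and that rescaling $U[K]$ by $\sqrt N$ preserves singularity, to conclude.

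The main obstacle is step (2): justifying the Jacobi identity cleanly. One can either cite it as a standard fact (it is, e.g., a consequence of the Schur complement formula applied to the block decomposition of $U$ adapted to the partition $K \sqcup K^C$), or prove it in a line or two via the adjugate: the $(K^C,K^C)$ block of $\operatorname{adj}(U)$ has determinant $\det(U)^{|K^C|-1}\det(U[K])$ by the Jacobi theorem on minors of the adjugate, and $\operatorname{adj}(U) = \det(U)\,U^{-1}$. Care must be taken that the ambient index set is $\{0,1,\dots,N-1\}$ rather than $\{1,\dots,N\}$, but this is purely cosmetic. Once the identity is in hand, the symmetry between $K$ and $K^C$ is immediate, and no number-theoretic input about $N$ is needed — this is a purely linear-algebraic statement valid for any unitary (indeed any invertible) matrix.
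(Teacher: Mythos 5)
Your argument is correct, and it is essentially the same proof the paper points to: the paper does not reprove the proposition but cites Loukaki's elementary argument via the Jacobi complementarity theorem applied to the (scaled) unitary Fourier matrix, which is exactly your route, with the harmless sign in Jacobi's identity vanishing for principal minors and the conjugation step justified by $U^*=\overline{U}$ (or simply by transpose-invariance of the determinant).
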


Although Proposition \ref{prop:complementarity} is perhaps folklore and can be proved using frame theory 
(see e.g., Proposition~5.4 in \cite{BCMS19} or Proposition~2.1 in \cite{MM09}), 
we were made aware of an elementary proof based on the Jacobi complementarity theorem in a discussion with Loukaki (see \cite{L25}). 

\begin{lemma}[Proposition 2.13 in \cite{CMN24}]\label{lem:indexreduction} 
Let $K=\{k_1,k_2,\dots,k_r\}\subset\{0,1,\dots,N-1\}$ and set $L=\{0,k_2-k_1,\dots,k_r-k_1\}$ where the differences are understood modulo $N$. Then $\mathcal F_N[K]$ is singular if and only if $\mathcal F_N[L]$ is singular.
\end{lemma}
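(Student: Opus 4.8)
The plan is to reduce the claim to the elementary observation that multiplying a row or column of a matrix by a nonzero scalar does not change whether it is singular. Write $\omega = e^{2\pi i/N}$ and recall $\mathcal F_N[K] = \big(\omega^{k\ell}\big)_{k,\ell\in K}$ with $K = \{k_1,\dots,k_r\}$ and $L = \{0,\, k_2-k_1,\,\dots,\,k_r-k_1\}$, the differences taken modulo $N$. The key algebraic identity is $\omega^{k_m k_n} = \omega^{(k_m-k_1)(k_n-k_1)}\,\omega^{k_1(k_m-k_1)}\,\omega^{k_1(k_n-k_1)}\,\omega^{k_1^2}$, which one verifies by expanding the exponent $k_m k_n = (k_m-k_1)(k_n-k_1) + k_1(k_m-k_1) + k_1(k_n-k_1) + k_1^2$ (all exponents may be read modulo $N$ since $\omega^N=1$). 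This exhibits the $(m,n)$ entry of $\mathcal F_N[K]$ as the corresponding entry of $\mathcal F_N[L]$ multiplied by a factor depending only on $m$, a factor depending only on $n$, and the global constant $\omega^{k_1^2}$.

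Concretely, I would let $D = \diag\big(\omega^{k_1(k_1-k_1)},\,\omega^{k_1(k_2-k_1)},\,\dots,\,\omega^{k_1(k_r-k_1)}\big)$, a diagonal matrix with unimodular (hence nonzero) diagonal entries, and observe from the identity above that
\[
\mathcal F_N[K] \;=\; \omega^{k_1^2}\, D\, \mathcal F_N[L]\, D.
\]
Taking determinants gives $\det \mathcal F_N[K] = \omega^{r k_1^2}\,(\det D)^2\,\det \mathcal F_N[L]$, and since $\omega^{r k_1^2}\ne 0$ and $\det D \ne 0$, we conclude $\det \mathcal F_N[K] = 0$ if and only if $\det \mathcal F_N[L] = 0$, which is exactly the claim. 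One should note in passing that the entries of $L$ are a valid index set: they are distinct modulo $N$ because the $k_i$ are, and this is all that is needed for $\mathcal F_N[L]$ to be a genuine $r\times r$ principal submatrix (after relabelling the residues into $\{0,1,\dots,N-1\}$).

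There is no real obstacle here; the only point requiring a moment's care is the bookkeeping of exponents modulo $N$ — one must be sure that replacing $k_m$ by its residue $k_m - k_1 \bmod N$ in the exponent of $\omega$ is legitimate, which it is precisely because $\omega^N = 1$, so that $\omega^{k_m k_n}$ depends only on $k_m k_n \bmod N$. I would state this reduction explicitly once at the start to avoid repeating it. Everything else is the routine factorization of a rank-one-type twist into a left and right diagonal scaling, together with multiplicativity of the determinant.
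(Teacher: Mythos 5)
Your proposal is correct and is essentially the paper's own argument: the paper performs the same factorization by pulling powers of $\omega$ out of each row and column (i.e., exactly your $\omega^{k_1^2} D\,\mathcal F_N[L]\,D$ decomposition phrased as elementary row/column operations), and your scalar $\omega^{rk_1^2}(\det D)^2$ agrees with the paper's factor $\omega^{k_1(-rk_1+2\sum_j k_j)}$. Your added remarks on reducing exponents modulo $N$ and on the distinctness of the elements of $L$ are fine and only make explicit what the paper leaves implicit.
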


\begin{proof}
Direct computation using elementary row and columns operations shows that 
\[
\begin{split}
&\begin{vmatrix} \omega^{a_1^2} & \omega^{a_1a_2} & \dots & \omega^{a_1a_r} \\ \omega^{a_2a_1} & \omega^{a_2^2} & \dots & \omega^{a_2a_r} \\ \vdots & \vdots & \ddots & \vdots \\ \omega^{a_ra_1} & \omega^{a_ra_2} & \dots & \omega^{a_r^2}
\end{vmatrix} \\
=\; & \omega^{a_1(-ra_1+2\sum_{j=1}^ra_j)} 
 \cdot \begin{vmatrix} 1 & 1 & \dots & 1 \\ 1 & \omega^{(a_2-a_1)^2} & \dots & \omega^{(a_2-a_1)(a_r-a_1)} \\ \vdots & \vdots & \ddots & \vdots \\ 1 & \omega^{(a_r-a_1)(a_2-a_1)} & \dots & \omega^{(a_r-a_1)^2}\end{vmatrix}.
\end{split}
\] 
\end{proof}



\section{Proof of Theorem \ref{thm:main1}}

In view of Proposition \ref{prop:complementarity}, we only need to deal with $r=2$ and $r=3$. Furthermore, due to Lemma \ref{lem:indexreduction}, we only need to consider the matrices $\mathcal F_N[\{0,a\}]$ for $0<a \leq N-1$ to settle the $r=2$ case, and the matrices $\mathcal F_N[\{0,a,b\}]$ for $0<a<b\leq N-1$ to settle the $r=3$ case. 

Note that since $N$ is square-free, the equation $n^2\equiv 0\mod{N}$ cannot hold for $0<n\leq N-1$; otherwise, $N$ would divide $n^2$, and the square-freeness of $N$ would imply that $N$ divides $n$.

If $0=\det \mathcal F_N[\{0,a\}]=\begin{vmatrix} 1 & 1 \\ 1 & \omega^{a^2}\end{vmatrix} = \omega^{a^2} -1$ with $0<a \leq N-1$, then $a^2 \in N\Z$, which leads to a contradiction as discussed above.  
Therefore, we have $\det \mathcal F_N[\{0,a\}] \neq 0$ for all $0<a \leq N-1$ (Corollary 2.14 in \cite{CMN24}).

In the case of $3\times 3$ principal minors, direct computation shows that
\[
\begin{split}\det\mathcal F_N[\{0,a,b\}] = \begin{vmatrix} 1 & 1 & 1 \\ 1 & \omega^{a^2} & \omega^{ab} \\ 1 & \omega^{ab} & \omega^{b^2}\end{vmatrix} & = \omega^{a^2+b^2} +2\omega^{ab} -\omega^{a^2}-\omega^{b^2}-\omega^{2ab} \\
& = \big(\omega^{a^2}-1\big)\big(\omega^{b^2}-1\big) - \big(\omega^{ab}-1\big)^2,
\end{split}\]
so the condition $\det\mathcal F_N[\{0,a,b\}]=0$ is equivalent to 
\begin{equation}\label{eq:33eq}
\big(\omega^{a^2}-1\big) \big(\omega^{b^2}-1\big) = \big(\omega^{ab}-1\big)^2.
\end{equation}
Note that for any $\theta\in[0,2\pi)$, we have 
\[\begin{split}
e^{i\theta}-1 &= e^{i\frac{\theta}{2}}\left(e^{i\frac{\theta}{2}}-e^{-i\frac{\theta}{2}}\right) \\
& =e^{i\frac{\theta}{2}}\Big(\cos\tfrac{\theta}{2}+i\sin\tfrac{\theta}{2}-\cos\big(-\tfrac{\theta}{2}\big)-i\sin\big(-\tfrac{\theta}{2}\big)\Big) \\
& = 2i\sin\tfrac{\theta}{2}e^{i\frac{\theta}{2}}.
\end{split}\]
Using this identity, \eqref{eq:33eq} becomes 
\[
\sin\big(\pi\tfrac{a^2}{N}\big)e^{\pi i\frac{a^2}{N}}\cdot \sin\big(\pi\tfrac{b^2}{N}\big)e^{\pi i\frac{b^2}{N}}=\sin^2\big(\pi\tfrac{ab}{N}\big)e^{\pi i\frac{2ab}{N}},
\]
that is, 
\[
\sin\big(\pi\tfrac{a^2}{N}\big) \sin\big(\pi\tfrac{b^2}{N}\big) e^{\pi i\frac{(b-a)^2}{N}} = \sin^2\big(\pi\tfrac{ab}{N}\big) .
\]
If $\sin^2\big(\pi\tfrac{ab}{N}\big) = 0$, then we must have $\sin\big(\pi\tfrac{a^2}{N}\big)=0$ or $\sin\big(\pi\tfrac{b^2}{N}\big)=0$, which is equivalent to $a^2 \in N\Z$ or $b^2 \in N\Z$. 
Otherwise, if $\sin^2\big(\pi\tfrac{ab}{N}\big) \neq 0$, we must have $e^{\pi i\frac{(b-a)^2}{N}} = \pm 1\in\R$, which is equivalent to $(b-a)^2 \in N\Z$. 
Since $a, b, b-a$ are in $\{ 0, 1, \ldots, N-1 \}$, any one of the conditions $a^2 \in N\Z$, $b^2 \in N\Z$, $(b-a)^2 \in N\Z$ leads to a contradiction as discussed above.  
Therefore, we conclude that 
$\mathcal F_N[\{0,a,b\}] \neq 0$ for all $0<a<b\leq N-1$. 
\hfill $\Box$ 

\section{Proof of Theorem \ref{thm:main2}}

We may write $N=p^2m$ for some prime $p \geq 2$ and some number $m \geq 1$. 

First, assume that $p=2$, i.e., $N = 4m$.
According to Proposition~\ref{prop:complementarity}, it is enough to prove the statement for $r$ satisfying $2\leq r\leq \frac{N}{2} = 2m$. 
Choose any set $L \subset \{ 0, 2, 4, \ldots, 2(2m-1) \}$ of cardinality $r$ containing $0$ and $2m$, and consider the $r \times r$ principal submatrix $\mathcal F_N[L]$ of $\mathcal F_N$. 
Since $\omega^{4m} = 1$, the rows of $\mathcal F_N[L]$ that are indexed by $0$ and $2m$ are both the vector containing only ones, i.e.,  
\[R_{0}=\Big(\omega^{0\cdot\ell}\Big)_{\ell\in L} = \big( 1 \big)_{\ell\in L} 
\quad \text{and} \quad 
R_{2m}=\Big(\omega^{2m\cdot\ell}\Big)_{\ell\in L} = \big( 1 \big)_{\ell\in L} . 
\]
Therefore, $\mathcal F_N[L]$ is singular.

Now, assume that $p \geq 3$. Again, due to Proposition~\ref{prop:complementarity}, we only need to prove the statement for $r$ satisfying $2\leq r\leq \frac{N}{2}$. 
%
%
%
%

\medskip

\noindent
\textbf{Case $2\leq r\leq p$.} \ 
Consider the set $Q=\{0,pm,2pm,\dots,(p-1)pm\}$ which is of cardinality $p$. Since $(kpm)\cdot(\ell pm)\equiv 0\mod p^2m$ for all $k, \ell$, the entries of $\mathcal F_N[Q]$ are all equal to $1$, and therefore, every principal submatrix of $\mathcal F_N[Q]$ is singular. This implies that the matrix $\mathcal F_N$ has a zero $r\times r$ principal minor for every $2\leq r\leq p$. 

\medskip

\noindent
\textbf{Case $p<r\leq\frac{N}{2}$.} \ 
For $j = 0 , 1, \ldots, p-1$, let 
\[K_j:= \{ kp+j : k=0,1, \ldots, pm-1 \} , \]
which is a set of cardinality $pm$.
Since $p<r\leq\frac{N}{2}$, there exist unique integers $0\leq s \leq \frac{p}{2}$ and $0\leq t<pm$ such that $r=spm+t$. 
Define
\[L:= K_0\cup K_1\cup \cdots \cup K_{s-1}\cup K_{s,t}, 
\] 
where $K_{s,t} := \{ kp+s : k=0,1, \ldots, t-1 \}$ is the subset of $K_s$ containing the first $t$ elements of $K_s$. 
Clearly, the set $L$ is of cardinality $r$. Since $r>p$, we can always ensure that $\{0,pm,2pm,\dots,(p-1)pm\}\subset L$.

We claim that the $r \times r$ matrix $\mathcal F_N[L]$ is singular. 
To see this, let us consider the rows of $\mathcal F_N[L]$ that are indexed by $\{0,pm,\ldots,(p-1)pm\} \subset K_0$, namely the rows 
\[R_{k pm}=\Big(\omega^{k pm\cdot\ell}\Big)_{\ell\in L}, 
\quad k = 0, 1, \ldots , p-1 . 
\]
By setting $\xi = \omega^{pm}=e^{2\pi i/p}$, we have 
\[R_{kpm} = \Big(\underbrace{1,1,\dots,1}_{pm},\underbrace{\xi^k,\xi^k,\dots,\xi^k}_{pm},\dots,\underbrace{\xi^{(s-1)k},\xi^{(s-1)k},\dots,\xi^{(s-1)k}}_{pm},\underbrace{\xi^{sk},\xi^{sk},\dots,\xi^{sk}}_{t}\Big). \]
Regardless of whether $t$ is zero or nonzero, these row vectors must be linearly dependent if the vectors 
\[v_k:=\Big(1,\xi^k,\xi^{2k},\dots,\xi^{(s-1)k},\xi^{sk}\Big),\quad k = 0, 1, \ldots , p-1,\]
are linearly dependent. 
Note that since $s\leq\frac{p}{2}$ and $p \geq 3$, we have $s+1 \leq\frac{p}{2} +1 < p$. 
As the vectors $v_k$, $k = 0, 1, \ldots , p-1$, live in dimension $s+1$, they are necessarily linearly dependent; thus, the matrix $\mathcal F_N[L]$ is singular.  
This completes the proof. 
\hfill $\Box$ 


\vspace{.2cm}
\noindent {\bf Acknowledgements.} A.~Caragea was supported by the German Research Foundation (DFG) Grant PF 450/11-1 and Grant CA 3683/1-1.
D.G.~Lee was supported by the National Research Foundation of Korea (NRF) grant funded by the Korean government (MSIT) (RS-2023-00275360).

The authors would like to thank Romanos D.~Malikiosis and G\"otz E.~Pfander for helpful discussions and simplifications towards the exposition of this document, Maria Loukaki for the elegant proof of Proposition \ref{prop:complementarity}, and Carlos Cabrelli for discussions concerning \cite{CMN24}. The authors would also like to thank Florian Lange and Aditi Bhushan for valuable support with numerical experiments.

Both authors contributed equally to this work and are jointly first authors. 

\vspace{-.5cm}
\section*{Author affiliations}

\begin{thebibliography}{99}

\bibitem{AB19}
C. Aubel, H. B\"{o}lcskei, 
{V}andermonde matrices with nodes in the unit disk and the large sieve, 
Appl. Comput. Harmon. Anal. 47 (2019), 53--86. 

\bibitem{Ba22}
A. H. Barnett,
How exponentially ill-conditioned are contiguous submatrices of the {F}ourier matrix?, 
SIAM Rev. 64 (2022), 105--131.

\bibitem{Ba00}
F. S. V. Baz\'{a}n, 
Conditioning of rectangular {V}andermonde matrices with nodes in the unit disk, 
SIAM J. Matrix Anal. Appl. 21 (2000), 679--693. 

\bibitem{BCMS19} 
M.~Bownik, P.~Casazza, A.W.~Marcus, D.~Speegle,
Improved bounds in {W}eaver and {F}eichtinger conjectures,
J. Reine Angew. Math. 749 (2019), 267--293. 

\bibitem{CMN24}
C. Cabrelli, U. Molter, F. Negreira,
Weaving {R}iesz bases,
J. Fourier Anal. Appl. 31 (2025), Paper No. 4, 20 pp.

\bibitem{CRT06}
E. Cand\'es, J. Romberg, T. Tao,
Robust uncertainty principles: exact signal reconstruction from highly incomplete frequency information,
IEEE Trans. Inform. Theory 52 (2006), 489--509.

\bibitem{C24}
A. Caragea,
{B}alian-{L}ow phenomena, exponential bases and neural network approximation, 
PhD Thesis (2024), https://doi.org/10.17904/ku.opus-908.

\bibitem{CL22}
A. Caragea, D. G. Lee,
A note on exponential {R}iesz bases,
Sampl. Theory Signal Process. Data Anal. 20 (2022), Article No. 13, 14 pp.

\bibitem{DV08}
S. Delvaux, M. Van Barel, 
Rank-deficient submatrices of {F}ourier matrices,
Linear Algebra Its Appl. 429 (2008), 1587--1605.  

\bibitem{FR13}
S. Foucart, H. Rauhut,
A Mathematical Introduction to Compressive Sensing,
Birkh{\"a}user (2013).

\bibitem{KN15}
G. Kozma, S. Nitzan,
Combining {R}iesz bases,
Invent. Math. 199 (2015), 267--285.

\bibitem{LPW23}
D. G. Lee, G. Pfander, D. Walnut,
Bases of complex exponentials with restricted supports,
J. Math. Anal. Appl. 521 (2023), Article No. 126917, 15 pp.

\bibitem{LL21}
W. Li, W. Liao,
Stable super-resolution limit and smallest singular value of restricted {F}ourier matrices, 
Appl. Comput. Harmon. Anal. 51 (2021), 118--156. 

\bibitem{L25}
M. Loukaki,
{C}hebotarev's theorem for roots of unity of square free order,
https://arxiv.org/abs/2412.08600

\bibitem{MM09}
B.~Matei, Y.~Meyer,
A variant of compressed sensing,
Rev. Mat. Iberoam. 25 (2009), 669--692. 

\bibitem{PRW24}
G. Pfander, S. Revay, D. Walnut,
Exponential bases for partitions of intervals,
Appl. Comput. Harmon. Anal. 68 (2024), Article No. 101607, 22 pp.

\bibitem{SL96}
P. Stevenhagen, H. W. Jr. Lenstra,
{C}hebotar\"{e}v and his density theorem,
Math. Intelligencer 18 (1996), 26--37.



\end{thebibliography}
\end{document}